\newtheorem{theorem}{Theorem}[section]
\newtheorem{prop}[theorem]{Proposition}
\newtheorem{lemma}[theorem]{Lemma}
\theoremstyle{definition}
\newtheorem{defn}[theorem]{Definition}
\newtheorem{question}{Question}
\title{A note on when amenable traces are quasidiagonal}
\author{Robert Neagu}
\address{\hskip-\parindent Robert Neagu, Mathematical Institute, University of Oxford, Oxford, OX2 6GG, UK.}
\email{robert.neagu@maths.ox.ac.uk}
\thanks{The author was supported by the EPSRC grant EP/R513295/1.}
\begin{document}
\maketitle

\begin{abstract}
    We will show that for a separable exact $C^*$-algebra with a faithful amenable trace, the property that all amenable traces are quasidiagonal is invariant under homotopy.
\end{abstract}

\newcommand{\nuc}{\operatorname{nuc}}
\newcommand{\ev}{\operatorname{ev}}
\newcommand{\id}{\operatorname{id}}
\newcommand{\Ext}{\operatorname{Ext}}
\newcommand{\tr}{\operatorname{tr}}
\newcommand{\II}{\operatorname{II}}
\newcommand{\Ad}{\operatorname{Ad}}
\newcommand{\UCT}{\operatorname{UCT}}
\newcommand{\UHF}{\operatorname{UHF}}
\newcommand{\VII}{\operatorname{VII}}
\newcommand{\KK}{\operatorname{KK}}

\section*{Introduction}
\renewcommand*{\thetheorem}{\Alph{theorem}}

Over the years, many $C^*$-properties have been proved to have a strong topological flavour. One of the first such results was obtained by Voiculescu in \cite{voiculescu}, where he proved that quasidiagonality, an external finite dimensional approximation property, is invariant under homotopy. Then, Dadarlat defined in \cite{dadarlat} quasidiagonality for asymptotic morphisms and proved that this property is still invariant under an appropriate notion of homotopy. 

Developing such approximation properties to traces proved very fruitful. Amenability of traces dates back to the work of Connes \cite{connes}, where he essentially showed that a $\II_1$-factor is injective if and only if its unique normalised trace is amenable. Later, quasidiagonal traces were introduced by Brown in \cite{qdtraces} and they proved to be a fundamental tool in obtaining the classification theorem of simple nuclear $C^*$-algebras, via the quasidiagonality theorem of Tikuisis, White, and Winter \cite{TWW}. In \cite{qdtraces}, Brown also observed that any quasidiagonal trace is amenable and asked if the converse also holds. The converse can be viewed as a generalisation of Rosenberg's Conjecture, asking if any group $C^*$-algebra of a discrete amenable group is quasidiagonal. This was answered affirmatively in \cite{TWW}. Until the quasidiagonality theorem \cite{TWW}, not so many results were known in this direction. One such result shows that if $A$ is any $C^*$-algebra, then any amenable trace on its cone $C_0((0,1],A)$ is, in fact, quasidiagonal {\cite[Proposition 3.2]{BCW}}. 

\begin{question}\label{brownq}
Are all amenable traces quasidiagonal?
\end{question}

Schafhauser introduced in \cite{chris1} new techniques for proving the quasidiagonality theorem of Tikuisis, White, and Winter \cite{TWW}. We will make use of these techniques to investigate the behaviour of Question \ref{brownq} under homotopy. The main results we are going to prove are the following.

\begin{theorem}\label{no1}
Let $A$ be a separable exact $C^*$-algebra with a faithful amenable trace $\tau$. If $\sigma:A\rightarrow A$ is a $^*$-homomorphism which is homotopic to the identity map on $A$ and $\tau\circ\sigma$ is a quasidiagonal trace on $A$, then $\tau$ is quasidiagonal on $A$.
\end{theorem}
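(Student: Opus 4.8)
The plan is to reformulate quasidiagonality of a trace as a lifting problem and to read off the obstruction as a class in an $\Ext$-group, which is homotopy invariant. Write $Q_\omega$ for the norm ultraproduct $\prod_{n\to\omega} M_{k_n}$ of a sequence of matrix algebras, equipped with the limit trace $\tau_\omega=\lim_\omega \tr_{k_n}$, and let $\mathcal{Q}^\omega = Q_\omega / J_\tau$ be the associated tracial (von Neumann) ultrapower, where $J_\tau=\{x\in Q_\omega : \tau_\omega(x^*x)=0\}$ is the trace-kernel ideal. This yields the \emph{trace-kernel extension}
\begin{equation*}
\eta\colon\qquad 0 \longrightarrow J_\tau \longrightarrow Q_\omega \overset{\pi}{\longrightarrow} \mathcal{Q}^\omega \longrightarrow 0 .
\end{equation*}
Since $A$ is separable and exact and $\tau$ is amenable, $\tau$ is induced by a nuclear trace-preserving $^*$-homomorphism $\bar\phi\colon A\to\mathcal{Q}^\omega$, i.e. $\tau = \tau^\omega\circ\bar\phi$, and faithfulness of $\tau$ makes $\bar\phi$ injective. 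The key observation is that, because $\tau_\omega=\tau^\omega\circ\pi$, a trace of this form is quasidiagonal \emph{precisely when} $\bar\phi$ admits a $^*$-homomorphic lift $\phi\colon A\to Q_\omega$ with $\pi\circ\phi=\bar\phi$: asymptotically multiplicative, asymptotically trace-preserving c.p.c.\ maps into matrices assemble into such a lift, and conversely.

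First I would record that the solvability of this lifting problem depends only on the unitary equivalence class of $\bar\phi$ in $\mathcal{Q}^\omega$ and is governed by the class $[\bar\phi^*\eta]$ of the pulled-back extension $0\to J_\tau\to Q_\omega\times_{\mathcal{Q}^\omega}A\to A\to 0$ in the abelian group $\Ext(A,J_\tau)$ (conceptually $\KK^1(A,J_\tau)$). Amenability together with exactness guarantees that this extension is semisplit, so its class lies in the group and behaves functorially. The genuine content here is that $[\bar\phi^*\eta]=0$ \emph{forces} the existence of an honest lift: this is where Schafhauser's existence-and-uniqueness machinery \cite{chris1} enters, exploiting the strong absorption (Weyl--von Neumann type) properties of $J_\tau$ to promote a stable splitting to a genuine $^*$-homomorphic section. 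Note that, unlike the full quasidiagonality theorem \cite{TWW}, no appeal to the $\UCT$ is needed, since I never compute this class from scratch---I only transport a class already known to vanish.

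Next I would treat $\tau\circ\sigma$. As $\sigma$ is a $^*$-homomorphism, $\bar\phi\circ\sigma\colon A\to\mathcal{Q}^\omega$ is trace-preserving for $\tau\circ\sigma$. By the uniqueness of trace-preserving embeddings into the von Neumann ultrapower inducing a fixed trace, any embedding witnessing amenability of $\tau\circ\sigma$ is unitarily equivalent to $\bar\phi\circ\sigma$; hence quasidiagonality of $\tau\circ\sigma$ is equivalent to liftability of $\bar\phi\circ\sigma$, that is, to $[(\bar\phi\circ\sigma)^*\eta]=0$. By functoriality of the pullback, $(\bar\phi\circ\sigma)^*\eta=\sigma^*(\bar\phi^*\eta)$, so that
\begin{equation*}
[(\bar\phi\circ\sigma)^*\eta]=\sigma^*[\bar\phi^*\eta]\in\Ext(A,J_\tau).
\end{equation*}

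Finally I would invoke homotopy invariance of $\Ext(-,J_\tau)$ in the first variable for semisplit extensions: since $\sigma$ is homotopic to $\id_A$, the induced endomorphisms satisfy $\sigma^*=\id^*=\id$ on $\Ext(A,J_\tau)$. Therefore $[\bar\phi^*\eta]=\sigma^*[\bar\phi^*\eta]=[(\bar\phi\circ\sigma)^*\eta]=0$, the last equality because $\tau\circ\sigma$ is quasidiagonal. By the existence direction above, $\bar\phi$ then lifts, so $\tau$ is quasidiagonal. The main obstacle is precisely that existence direction---passing from vanishing of the $\Ext$-class to a genuine trace-preserving lift---together with the uniqueness statement for embeddings into $\mathcal{Q}^\omega$; by contrast, the homotopy-invariance input is formal once the obstruction has been placed in an $\Ext$-group.
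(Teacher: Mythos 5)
Your overall strategy is exactly the one the paper follows: recast quasidiagonality of an amenable trace as the problem of lifting the trace-preserving map $\bar\phi$ along the trace-kernel extension, encode the obstruction as an extension class, note that precomposition with $\sigma$ transports the class functorially, and use homotopy invariance to transfer vanishing from $\tau\circ\sigma$ back to $\tau$. However, as written the argument has two genuine gaps, both located precisely at the step you yourself flag as ``the main obstacle.''

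First, the existence direction --- ``$[\bar\phi^*\eta]=0$ forces an honest lift'' --- relies on the pulled-back extension being nuclearly absorbing, and the absorption theorem (Elliott--Kucerovsky/Gabe) requires the extension to be \emph{unitizably full}. A unital Busby invariant can never be unitizably full, so when $A$ is unital and $\tau$ is a tracial state (hence $\bar\phi$ unital) your argument breaks down exactly where you need it. The paper circumvents this by first proving the result for the non-unital case and then running the whole argument for $\tfrac{1}{2}\tau$, which is amenable but not a state, and observing that quasidiagonality of $\tfrac{1}{2}\tau$ implies quasidiagonality of $\tau$ (Lemma 1.6). Some such reduction is needed; without it the unital case is simply not covered. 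Second, you place the obstruction in $\Ext(A,J_\tau)$ with $J_\tau$ the full (non-separable, non-$\sigma$-unital) trace-kernel ideal; in that generality $\Ext$ need not be a group, the identification with $\KK^1$ is unavailable, and the absorption theorem does not apply. The paper devotes Propositions 2.1--2.2 to cutting the trace-kernel extension down to a separable subextension whose kernel $J_0$ is a separable admissible kernel (stable, with the corona factorization property), and only then works in $\Ext_{\nuc}(A,J_0)$. Relatedly, since $A$ is merely exact rather than nuclear, the relevant group is $\Ext_{\nuc}$, its homotopy invariance comes from $\KK^1_{\nuc}\cong\Ext_{\nuc}$ (Kucerovsky--Ng), and one must check that the homotopy of Busby invariants induced by $\sigma\simeq\id_A$ is realised through \emph{nuclear} maps (which it is, because $\phi_0$ is nuclear by exactness). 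Your appeal to plain homotopy invariance of $\Ext(-,J_\tau)$ glosses over all three of these points.
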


\begin{theorem}\label{no2}
Let $A$ be a separable exact $C^*$-algebra with a faithful amenable trace and suppose $A$ is homotopy dominated by some $C^*$-algebra $B$. If all amenable traces on $B$ are quasidiagonal, then all amenable traces on $A$ are quasidiagonal.
\end{theorem}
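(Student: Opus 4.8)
The plan is to deduce Theorem~\ref{no2} from Theorem~\ref{no1}, the only genuine obstacle being that Theorem~\ref{no1} requires the trace in question to be \emph{faithful}, whereas a general amenable trace on $A$ need not be. Write the homotopy domination as a pair of $^*$-homomorphisms $\varphi\colon A\to B$ and $\psi\colon B\to A$ with $\sigma:=\psi\circ\varphi$ homotopic to $\id_A$, and let $\tau_0$ denote the given faithful amenable trace on $A$. First I would record two routine permanence facts, both proved by composing the defining system of u.c.p.\ maps with a $^*$-homomorphism: amenable traces pull back along $\psi$ (approximate multiplicativity in the $2$-norm is preserved because $\psi$ is itself multiplicative), and quasidiagonal traces pull back along $\varphi$ (the identical argument with operator-norm approximate multiplicativity). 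I would also use that a convex combination of amenable traces is again amenable.

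Now fix an arbitrary amenable trace $\tau$ on $A$; the goal is to show it is quasidiagonal. For $t\in(0,1]$ set $\mu_t:=(1-t)\tau+t\tau_0$. Since $\tau_0$ is faithful and $t>0$, each $\mu_t$ is a faithful trace, and being a convex combination of the amenable traces $\tau$ and $\tau_0$ it is amenable. Then $\mu_t\circ\psi$ is an amenable trace on $B$, hence quasidiagonal by hypothesis, and therefore $\mu_t\circ\sigma=(\mu_t\circ\psi)\circ\varphi$ is a quasidiagonal trace on $A$. As $A$ is separable and exact and $\mu_t$ is a faithful amenable trace with $\sigma$ homotopic to $\id_A$ and $\mu_t\circ\sigma$ quasidiagonal, Theorem~\ref{no1} applies and shows that $\mu_t$ is quasidiagonal for every $t\in(0,1]$.

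It then remains to pass to the limit $t\to 0$. Since $\mu_t\to\tau$ in the weak-$*$ topology as $t\to 0$, I would conclude by invoking (sequential) weak-$*$ closedness of the set of quasidiagonal traces on the separable algebra $A$: fixing a dense sequence $(a_i)$ in $A$ and choosing, for each $m$, a u.c.p.\ map $\phi_m\colon A\to M_{k_m}$ coming from the quasidiagonality of $\mu_{1/m}$ that is $1/m$-multiplicative on $\{a_i\}_{i\le m}$ and satisfies $|\tr_{k_m}(\phi_m(a_i))-\mu_{1/m}(a_i)|<1/m$ for $i\le m$, a diagonal argument gives $\tr_{k_m}\circ\phi_m\to\tau$ pointwise while approximate multiplicativity persists, so that $\tau$ is quasidiagonal.

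The main obstacle is precisely the faithfulness hypothesis of Theorem~\ref{no1}: it is what forces the averaging trick with $\tau_0$, and the whole argument then hinges on the permanence of quasidiagonality under the two pullbacks and under weak-$*$ limits. Of these, the pullback statements are immediate, so the only point requiring real care is the weak-$*$ limit, where separability of $A$ is exactly what makes the diagonalisation go through.
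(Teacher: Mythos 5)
Your proposal is correct and follows essentially the same route as the paper: pull the trace back along $\psi$ to $B$, use the hypothesis there, pull back along $\varphi$, apply Theorem~\ref{no1} to the faithful convex combinations with $\tau_0$, and conclude by weak$^*$-closedness of the quasidiagonal traces (which the paper simply cites from Brown rather than reproving by diagonalisation). The only cosmetic difference is that you spell out the closedness argument and work with a continuous parameter $t$ instead of the sequence $\frac{n-1}{n}\tau+\frac{1}{n}\tau_0$.
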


\subsection*{Acknowledgements} The author would like to thank Stuart White and Chris Schafhauser for helpful comments.

\allowdisplaybreaks

\section{Preliminaries and the contractible case}
Here, by a \emph{trace} $\tau$ on a $C^*$-algebra $A$ we mean a positive \emph{contractive} linear functional such that $\tau(ab)=\tau(ba)$ for all $a,b\in A$. In particular, if $A$ is unital and $\tau(1)=1$, then we say $\tau$ is a \emph{tracial state}. Since we are going to consider traces composed with $^*$-homomorphisms, we cannot expect that traces of this form will be states, so we need to consider all tracial functionals with norm less than or equal to $1$.

\begin{defn}\label{amqdef}
Let $A$ be a separable $C^*$-algebra.\begin{enumerate}
\item  A trace $\tau:A\rightarrow\mathbb{C}$ is called \emph{amenable} if for all $n\in\mathbb{N}$, there is an integer $k(n)\geq 1$ and a cpc map $\phi_n:A\rightarrow \mathbb{M}_{k(n)}(\mathbb{C})$ such that $$\|\phi_n(ab)-\phi_n(a)\phi_n(b)\|_2 \rightarrow 0$$ and $\tr_{k(n)}(\phi_n(a))\rightarrow \tau(a)$ for all $a,b\in A$, where $\|x\|_2=\tr_{k(n)}(x^*x)^{1/2}$ and $\tr_{k(n)}$ is the unique normalised trace on $\mathbb{M}_{k(n)}(\mathbb{C})$.
\item A trace $\tau:A\to\mathbb{C}$ is \emph{quasidiagonal} if for all $n\in \mathbb{N}$ there is an integer $k(n)\geq 1$ and a cpc map $\phi_n:A\rightarrow \mathbb{M}_{k(n)}(\mathbb{C})$ such that $$\|\phi_n(ab)-\phi_n(a)\phi_n(b)\|\rightarrow 0$$ and $\tr_{k(n)}(\phi_n(a))\rightarrow \tau(a)$ for all $a,b\in A$.
    \end{enumerate}
\end{defn}

Throughout, $\omega$ will stand for a fixed free ultrafilter on $\mathbb{N}$. Then, we can define $$\mathcal{Q}_{\omega} := \ell^{\infty}(\mathcal{Q})/\{(x_n)_{n=1}^{\infty} \in \ell^{\infty}(\mathcal{Q}) : \lim_{n\to\omega} \|x_n\|= 0\}$$ to be the uniform ultrapower of the universal $\UHF$-algebra $\mathcal{Q}$. Similarly, 

$$\mathcal{R}^{\omega} := \ell^{\infty}(\mathcal{R})/\{(x_n)_{n=1}^{\infty} \in \ell^{\infty}(\mathcal{R}) : \lim_{n\to\omega} \|a_n\|_2= 0\}$$ represents the tracial ultrapower of the hyperfinite $\II_1$-factor $\mathcal{R}$. Let $\tr_{\omega}$ and $\tr^{\omega}$ denote the
traces on $\mathcal{Q}_{\omega}$ and $\mathcal{R}^{\omega}$ induced by the unique traces on $\mathcal{Q}$ and $\mathcal{R}$. 

A standard application of Kaplansky's density theorem shows that the map $\pi:\mathcal{Q}_{\omega}\to\mathcal{R}^{\omega}$ induced by the canonical inclusion $\mathcal{Q}\hookrightarrow\mathcal{R}$ is a surjection. Therefore, one obtains an extension
\[
\begin{tikzcd}
0\ar{r} & J \ar{r} & \mathcal{Q}_{\omega} \ar{r}{\pi} & \mathcal{R}^{\omega} \ar{r} & 0,
\end{tikzcd}
\]known as the trace-kernel extension. Schafhauser's breakthrough rephrased the quasidiagonality of a trace into a lifting problem. For convenience, we will record the following result which is well-known to experts, (see {\cite[Proposition 1.3]{chris1}}).

\begin{prop}\label{keyprop}
Let $A$ be a separable $C^*$-algebra.
\begin{enumerate}
\item A trace $\tau$ is amenable on $A$ if and only if there is a $^*$-homomorphism $\phi:A\rightarrow \mathcal{R}^{\omega}$ with a cpc lift $A\rightarrow \ell^{\infty}(\mathcal{R})$ such that $\tr^{\omega}\circ\phi=\tau$.

\item A trace $\tau$ is quasidiagonal on $A$ if and only if there is a $^*$-homomorphism $\phi:A\rightarrow \mathcal{Q}_{\omega}$ with a cpc lift $A\rightarrow \ell^{\infty}(\mathcal{Q})$ such that $\tr_{\omega}\circ\phi=\tau$.
\end{enumerate}
\end{prop}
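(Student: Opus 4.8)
The plan is to prove both parts at once, since they are identical apart from the norm and ultrapower used: part (1) works with the $2$-norm of $\mathcal{R}$ and the tracial ultrapower $\mathcal{R}^{\omega}$, while part (2) works with the operator norm of $\mathcal{Q}$ and the uniform ultrapower $\mathcal{Q}_{\omega}$. I will describe (1) and then indicate the verbatim changes for (2). For the forward implication of (1), I would start from the cpc maps $\phi_n:A\to\mathbb{M}_{k(n)}$ of Definition \ref{amqdef} and fix for each $n$ a unital, hence trace-preserving, embedding $\mathbb{M}_{k(n)}\hookrightarrow\mathcal{R}$, obtaining cpc maps $\psi_n:A\to\mathcal{R}$. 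Assembling these into $\Psi=(\psi_n)_n:A\to\ell^{\infty}(\mathcal{R})$ and composing with the quotient onto $\mathcal{R}^{\omega}$ gives a cpc map $\phi:A\to\mathcal{R}^{\omega}$. Each $\psi_n$ is positive and so $^*$-preserving, and the condition $\|\phi_n(ab)-\phi_n(a)\phi_n(b)\|_2\to 0$ becomes exactly $\phi(ab)=\phi(a)\phi(b)$ in $\mathcal{R}^{\omega}$; hence $\phi$ is a $^*$-homomorphism with cpc lift $\Psi$, and $\tr^{\omega}(\phi(a))=\lim_{n\to\omega}\tr_{k(n)}(\phi_n(a))=\tau(a)$. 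For (2) I would run the same construction with $\mathcal{Q}$ and the operator norm replacing $\mathcal{R}$ and the $2$-norm.

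For the reverse implication of (1), suppose $\phi:A\to\mathcal{R}^{\omega}$ is a $^*$-homomorphism with cpc lift $\Phi=(\Phi_n)_n:A\to\ell^{\infty}(\mathcal{R})$ and $\tr^{\omega}\circ\phi=\tau$. Then each $\Phi_n:A\to\mathcal{R}$ is cpc, and the two hypotheses unwind to $\lim_{n\to\omega}\|\Phi_n(ab)-\Phi_n(a)\Phi_n(b)\|_2=0$ and $\lim_{n\to\omega}\tr_{\mathcal{R}}(\Phi_n(a))=\tau(a)$ for all $a,b\in A$. The aim is to compress the $\Phi_n$ into genuine matrix algebras. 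I would use that $\mathcal{R}$ is the $2$-norm closure of an increasing union of unital matrix subalgebras $\mathbb{M}_{2^m}\subseteq\mathcal{R}$, with trace-preserving conditional expectations $E_m:\mathcal{R}\to\mathbb{M}_{2^m}$ that are cpc, $2$-norm contractive, and satisfy $\|E_m(x)-x\|_2\to 0$ for every $x\in\mathcal{R}$. Given a finite $F\subseteq A$ and $\epsilon>0$, I would pick $n$ close to $\omega$ and then $m$ large enough that $\|E_m(\Phi_n(a))-\Phi_n(a)\|_2<\delta$ for $a\in F$, and set $\theta=E_m\circ\Phi_n:A\to\mathbb{M}_{2^m}$. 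This $\theta$ is cpc, and since $E_m$ preserves the trace we get $\tr_{2^m}(\theta(a))=\tr_{\mathcal{R}}(\Phi_n(a))\approx\tau(a)$.

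The step I expect to be the main obstacle is showing that $\theta$ remains approximately multiplicative, since $E_m$ is not multiplicative. Writing $x=\Phi_n(a)$, $y=\Phi_n(b)$, $x'=E_m(x)$ and $y'=E_m(y)$, I would note that $x'y'\in\mathbb{M}_{2^m}$ and, using $\|zw\|_2\le\|w\|\,\|z\|_2$ and $\|zw\|_2\le\|z\|\,\|w\|_2$ together with $\|x'\|,\|y\|\le 1$,
\begin{equation*}
\|xy-x'y'\|_2\le\|(x-x')y\|_2+\|x'(y-y')\|_2\le 2\delta .
\end{equation*}
Since $E_m(x'y')=x'y'=\theta(a)\theta(b)$ and $E_m$ is $2$-norm contractive, this gives
\begin{equation*}
\|\theta(ab)-\theta(a)\theta(b)\|_2\le\|\Phi_n(ab)-xy\|_2+\|E_m(xy)-x'y'\|_2\le\|\Phi_n(ab)-xy\|_2+2\delta ,
\end{equation*}
which is small once $n$ is near $\omega$ and $\delta$ is small. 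Running this along an increasing sequence of finite subsets of $A$ with dense union and $\epsilon\to 0$, and choosing the indices $n$ from the sets in $\omega$ on which the estimates hold, I would obtain cpc maps $\theta_m:A\to\mathbb{M}_{k(m)}$ witnessing that $\tau$ is amenable. Part (2) goes through identically, using the canonical matrix subalgebras of the $\UHF$-algebra $\mathcal{Q}$, whose conditional expectations converge to the identity in operator norm, and submultiplicativity of the operator norm. In both reverse directions the one genuinely essential hypothesis is the existence of the cpc lift: it is precisely what allows the ultrapower $^*$-homomorphism to be unpacked into honest finite-dimensional approximations.
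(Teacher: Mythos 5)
Your argument is correct, and it is essentially the standard proof of this folklore fact: the paper gives no proof at all (it only cites Schafhauser's Proposition 1.3), and your two directions — assembling the matrix approximations into the ultrapower via unital, hence trace-preserving, embeddings $\mathbb{M}_{k(n)}\hookrightarrow\mathcal{R}$ (resp.\ $\mathcal{Q}$), and conversely compressing a cpc lift by the trace-preserving conditional expectations onto the canonical finite-dimensional subalgebras, which converge pointwise to the identity in $2$-norm (resp.\ operator norm) — are exactly the expected route. The only points you leave implicit, namely normalizing $\|a\|,\|b\|\le 1$ in the multiplicativity estimate and passing from an increasing sequence of finite sets with dense union to all of $A$ via the uniform continuity of cpc maps, are routine.
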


We are now in the position to state the key idea in Schafhauser's approach in \cite{chris1}. If $\tau$ is any amenable trace on a separable exact $C^*$-algebra $A$, Proposition \ref{keyprop} gives a trace-preserving $^*$-homomorphism $\phi:A\to\mathcal{R}^{\omega}$ which has a cpc lift into $\ell^{\infty}(\mathcal{R})$. Putting together $\phi$ and the canonical quotient map $\pi$, we get the following pullback extension
\begin{equation}\label{pullbackext}
\begin{tikzcd}
\eta_0 :0\ar{r}& J\ar{r}\ar{d}{\id} & E_0 \ar{r}\ar{d} & A\ar{r}\ar{d}{\phi} & 0\\
\eta   :0\ar{r} & J \ar{r} & \mathcal{Q}_{\omega} \ar{r}{\pi} & \mathcal{R}^{\omega} \ar{r} & 0.
\end{tikzcd}
\end{equation} Recall that a map $\sigma:A\to E_0$ is \emph{weakly nuclear} if for all $x\in J$, the map $A\to J$ given by $a \mapsto x\sigma(a)x^*$ is nuclear. Since $\eta_0$ is a pullback extension, one can note that $\eta_0$ has a $^*$-homomorphic splitting $\sigma:A\to E_0$ if and only if there exists a lift $\psi:A\to\mathcal{Q}_{\omega}.$ The only if direction is clear, and if we have a lift, then we can define $\sigma(a)=a\oplus\psi(a)$. Note that if $\psi$ is nuclear, then $\sigma$ is weakly nuclear.

The key point is that $\eta_0$ has a weakly nuclear $^*$-homomorphic splitting if and only if $\tau$ is quasidiagonal. The only if direction appears in {\cite[Theorem 1.2]{chris1}}, and the converse, even not spelt out explicitly, is contained in the proof of {\cite[Proposition 4.3]{chris2}}. Let us include a proof for the convenience of the reader.

\begin{lemma}\label{qdsplits}
Let $\tau$ be an amenable trace on a separable exact $C^*$-algebra $A$. Then the extension $\eta_0$ constructed above has a weakly nuclear $^*$-homomorphic splitting if and only if $\tau$ is quasidiagonal. 
\end{lemma}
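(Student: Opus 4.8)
The plan is to prove both implications by passing, via Proposition \ref{keyprop}, between quasidiagonality of $\tau$ and the existence of a suitable $^*$-homomorphic lift of $\phi$ along $\pi$. The excerpt has already reduced the existence of a $^*$-homomorphic splitting $\sigma:A\to E_0$ of $\eta_0$ to the existence of a $^*$-homomorphism $\psi:A\to\mathcal{Q}_{\omega}$ with $\pi\circ\psi=\phi$, via $\sigma(a)=a\oplus\psi(a)$, and has noted that $\sigma$ is weakly nuclear once $\psi$ is nuclear. Since $\tr^{\omega}\circ\pi=\tr_{\omega}$, any such lift automatically satisfies $\tr_{\omega}\circ\psi=\tr^{\omega}\circ\phi=\tau$. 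Thus the whole statement reduces to matching the data of Proposition \ref{keyprop}(2)\,---\,a cpc-liftable $^*$-homomorphism into $\mathcal{Q}_{\omega}$ with trace $\tau$\,---\,with the requirement that the lift be a lift of the \emph{specific} map $\phi$.

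For the implication that quasidiagonality yields a weakly nuclear splitting, I would start from Proposition \ref{keyprop}(2), which provides a $^*$-homomorphism $\psi_0:A\to\mathcal{Q}_{\omega}$ with a cpc lift into $\ell^{\infty}(\mathcal{Q})$ and $\tr_{\omega}\circ\psi_0=\tau$. Composing with $\pi$ gives a $^*$-homomorphism $\pi\circ\psi_0:A\to\mathcal{R}^{\omega}$, which is again cpc-liftable (compose the lift with $\ell^{\infty}(\mathcal{Q})\hookrightarrow\ell^{\infty}(\mathcal{R})$) and satisfies $\tr^{\omega}\circ(\pi\circ\psi_0)=\tau=\tr^{\omega}\circ\phi$. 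The key step is then a uniqueness theorem for trace-preserving embeddings into the tracial ultrapower: two cpc-liftable $^*$-homomorphisms $A\to\mathcal{R}^{\omega}$ inducing the same trace are unitarily equivalent through a unitary $u\in\mathcal{R}^{\omega}$. As $\mathcal{R}^{\omega}$ is a $\II_1$-factor its unitary group is connected, so $u=\exp(ih)$ for some self-adjoint $h\in\mathcal{R}^{\omega}$; lifting $h$ to a self-adjoint element of $\mathcal{Q}_{\omega}$ and exponentiating produces a unitary $v\in\mathcal{Q}_{\omega}$ with $\pi(v)=u$. Setting $\psi=\Ad(v)\circ\psi_0$ then gives $\pi\circ\psi=\Ad(u)\circ\pi\circ\psi_0=\phi$, and conjugating the cpc lift of $\psi_0$ by a unitary lift of $v$ in $\ell^{\infty}(\mathcal{Q})$ shows $\psi$ is still cpc-liftable. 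The associated $\sigma(a)=a\oplus\psi(a)$ is the desired splitting, its weak nuclearity stemming from the nuclearity of $\mathcal{Q}$ as discussed below.

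For the converse, a weakly nuclear splitting $\sigma$ yields, as recorded in the excerpt, a $^*$-homomorphism $\psi:A\to\mathcal{Q}_{\omega}$ lifting $\phi$, and the weak nuclearity of $\sigma$ translates into weak nuclearity of $\psi$ relative to $J$, i.e. for $j\in J$ the map $a\mapsto j\psi(a)j^*$ is nuclear. Here exactness of $A$ enters: I would invoke the lifting argument underlying {\cite[Theorem 1.2]{chris1}} to promote such a weakly nuclear map from an exact $C^*$-algebra to a cpc lift $A\to\ell^{\infty}(\mathcal{Q})$. Combined with $\tr_{\omega}\circ\psi=\tau$, Proposition \ref{keyprop}(2) then gives that $\tau$ is quasidiagonal.

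The hard part, and the genuinely new ingredient beyond the bookkeeping, is twofold. First, the uniqueness theorem for $\mathcal{R}^{\omega}$-embeddings together with the lifting of the intertwining unitary from $\mathcal{R}^{\omega}$ to $\mathcal{Q}_{\omega}$ is what forces the abstract quasidiagonal datum $\psi_0$ to become a lift of the prescribed $\phi$. Second, the interface between weak nuclearity and cpc-liftability in the exact setting is delicate: nuclearity of each component $\psi^{(n)}:A\to\mathcal{Q}$ (automatic since $\mathcal{Q}$ is nuclear) does not formally pass to nuclearity of the induced map into $\mathcal{Q}_{\omega}$, since the naive factorisations run through $\prod_n\mathbb{M}_{k(n)}(\mathbb{C})$ rather than a single matrix algebra. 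Consequently exactness of $A$ must be used to control these approximate factorisations uniformly enough to produce (and lift) the map, and I expect this liftability step, rather than the unitary manipulations, to be where the exactness hypothesis is truly essential.
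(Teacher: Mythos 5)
Your argument follows the paper's proof in all essentials: the ``only if'' direction is delegated to {\cite[Theorem 1.2]{chris1}}, and the ``if'' direction takes a trace-preserving, cpc-liftable $\psi_0:A\to\mathcal{Q}_{\omega}$ from Proposition \ref{keyprop}(2), invokes the uniqueness up to unitary equivalence (after reindexing) of trace-preserving $^*$-homomorphisms into $\mathcal{R}^{\omega}$ to intertwine $\pi\circ\psi_0$ with $\phi$, and lifts the intertwining unitary to $\mathcal{Q}_{\omega}$ using connectedness of the unitary group of $\mathcal{R}^{\omega}$ --- exactly the paper's route. The one point to phrase carefully is that this uniqueness theorem holds for (weakly) \emph{nuclear} trace-preserving maps rather than merely cpc-liftable ones, which is precisely where exactness of $A$ enters (via {\cite[Proposition 3.1]{gabe2}} and {\cite[Lemma 5.1]{chris1}}), as your closing paragraph correctly anticipates.
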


\begin{proof}
The only if direction is shown in {\cite[Theorem 1.2]{chris1}}. Conversely, suppose $\tau$ is quasidiagonal. Proposition \ref{keyprop} then implies that there exists a trace-preserving $^*$-homomorphism $\psi:A\rightarrow\mathcal{Q}_{\omega}$ with a cpc lift $A\rightarrow \ell^{\infty}(\mathcal{Q})$. Since $A$ is exact, {\cite[Proposition 3.1]{gabe2}} gives that $\psi$ is nuclear.

Then, we have that $\tr^{\omega}\circ\phi=\tr^{\omega}\circ(\pi\circ\psi)$, with $\pi\circ\psi$ and $\phi$ nuclear by exactness of $A$ {\cite[Lemma 5.1]{chris1}}. Since $\mathcal{R}^{\omega}$ is a finite factor, by a consequence of Connes' theorem \cite{connes}, these two maps are approximmately unitarily equivalent (see {\cite[Proposition 1.1]{chris2}}). Moreover, since $A$ is separable, by a reindexing argument {\cite[Lemma 4.1]{reindexing}}, $\phi$ and $\pi\circ\psi$ are unitarily equivalent. Let $\tilde{v}$ be a unitary in $\mathcal{R}^{\omega}$ such that $\phi\circ\sigma=\Ad(\tilde{v})\pi\circ\psi$. As the unitary group of $\mathcal{R}^{\omega}$ is path-connected, there exists a unitary $v\in \mathcal{Q}_{\omega}$ such that $\pi(v)=\tilde{v}$. Therefore, replacing $\psi$ by $\Ad(v)\psi$, we can assume that $\pi\circ\psi=\phi\circ\sigma$, i.e. the extension $\eta_0$ splits. But $\psi$ is nuclear, so $\eta_0$ has a weakly nuclear $^*$-homomorphic splitting.
\end{proof}

Before proving our main results, we take a short detour and examine the case when $A$ is a contractible $C^*$-algebra. Since contractible $C^*$-algebras are homotopy equivalent to $0$, this is an instance where the property that all amenable traces are quasidiagonal is homotopy invariant.

\begin{prop}\label{contr}
Let $A$ be a separable contractible $C^*$-algebra. Then all amenable traces on $A$ are quasidiagonal.
\end{prop}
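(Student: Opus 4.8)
The plan is to reduce the statement to the already-known case of cones. Recall from the introduction that, by \cite[Proposition 3.2]{BCW}, every amenable trace on a cone $C_0((0,1],B)$ is quasidiagonal. Since quasidiagonality (and amenability) of a trace behaves well under precomposition with $^*$-homomorphisms, it suffices to exhibit $A$ as a $^*$-homomorphic retract of its own cone in a trace-compatible way. The splitting machinery of Lemma~\ref{qdsplits} is not needed here.

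First I would unwind contractibility. To say that $\id_A$ is homotopic to the zero endomorphism means there is a $^*$-homomorphism $H\colon A\to C([0,1],A)$ with $\ev_0\circ H=\id_A$ and $\ev_1\circ H=0$. Since $H(a)$ vanishes at the endpoint $1$ for every $a\in A$, reversing the interval turns $H$ into a $^*$-homomorphism $\Theta\colon A\to CA$, where $CA:=C_0((0,1],A)$, satisfying $\ev_1\circ\Theta=\id_A$; that is, $\Theta$ is a $^*$-homomorphic section of the quotient $\ev_1\colon CA\to A$. Note that $CA$ is again separable, so \cite[Proposition 3.2]{BCW} applies to it.

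Next, given an amenable trace $\tau$ on $A$, I would transport it to the cone by setting $\hat\tau:=\tau\circ\ev_1$. This is again a contractive trace, and composing the matricial maps $\phi_n\colon A\to\mathbb{M}_{k(n)}$ witnessing amenability of $\tau$ with the $^*$-homomorphism $\ev_1$ produces cpc maps $\phi_n\circ\ev_1\colon CA\to\mathbb{M}_{k(n)}$; because $\ev_1$ is multiplicative, the defects $\|\phi_n(\ev_1(xy))-\phi_n(\ev_1(x))\phi_n(\ev_1(y))\|_2$ equal $\|\phi_n(\ev_1(x)\ev_1(y))-\phi_n(\ev_1(x))\phi_n(\ev_1(y))\|_2\to 0$, and $\tr_{k(n)}(\phi_n(\ev_1(x)))\to\tau(\ev_1(x))=\hat\tau(x)$. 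Hence $\hat\tau$ is amenable on $CA$, and \cite[Proposition 3.2]{BCW} gives that $\hat\tau$ is quasidiagonal on $CA$. Finally I would pull this back along $\Theta$: composing the norm-asymptotically multiplicative maps witnessing quasidiagonality of $\hat\tau$ with $\Theta$ yields cpc maps $A\to\mathbb{M}_{m(n)}$ that are asymptotically multiplicative in operator norm, with traces converging to $\hat\tau\circ\Theta=\tau\circ\ev_1\circ\Theta=\tau$, so $\tau$ is quasidiagonal.

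The entire content lies in the cone identification together with the stability of both approximation properties under precomposition by a $^*$-homomorphism, and neither exactness nor faithfulness of $\tau$ enters. I expect no serious obstacle once the contraction is recast as a section of $\ev_1$; the only point requiring genuine care is verifying that amenability really transfers to $\hat\tau$, which succeeds precisely because $\ev_1$ is multiplicative, so that the $\|\cdot\|_2$-defects are left unchanged under composition.
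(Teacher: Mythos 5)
Your argument is correct and follows essentially the same route as the paper: both recast the contraction as a $^*$-homomorphic section $\Theta$ of $\ev_1\colon C_0((0,1],A)\to A$, push $\tau$ forward to the amenable trace $\tau\circ\ev_1$ on the cone, invoke \cite[Proposition 3.2]{BCW}, and pull quasidiagonality back along $\Theta$. The only cosmetic difference is that you verify the transfer steps directly with the matricial approximations of Definition~\ref{amqdef}, whereas the paper routes them through the ultrapower characterisation of Proposition~\ref{keyprop}.
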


\begin{proof}
Let $\tau$ be an amenable trace on $A$. Since $A$ is contractible, the identity map on $A$ is homotopic to the zero map. Therefore, there exists a $^*$-homomorphism $\theta:A\rightarrow C_0((0,1],A)$ such that $\theta(a)(1)=a$ for all $a\in A$.

Now observe that $\tau$ factorises as

\[
\begin{tikzcd}
    A \ar{r}{\theta} & C_0((0,1],A) \ar{r}{\tau\circ\ev_1} & \mathbb{C}.
\end{tikzcd}
\] Since $\tau$ is amenable on $A$ and $\ev_1$ is a $^*$-homomorphism, $\tau\circ\ev_1$ is amenable on the cone $C_0((0,1],A)$, so quasidiagonal {\cite[Proposition 3.2]{BCW}}. Therefore, there exists a $^*$-homomorphism $\phi:C_0((0,1],A)\rightarrow \mathcal{Q}_{\omega}$ with a cpc lift $\psi:C_0((0,1],A)\rightarrow \ell^{\infty}(\mathcal{Q})$ such that $\tr_{\omega}\circ\phi=\tau\circ\ev_1$, where $\tr_{\omega}$ is the induced trace on $\mathcal{Q}_{\omega}$.

Now $\phi\circ \theta: A\rightarrow \mathcal{Q}_{\omega}$ is a $^*$-homomorphism with a cpc lift $\psi\circ\theta:A\rightarrow \ell^{\infty}(\mathcal{Q})$ such that $$\tr_{\omega}\circ\phi\circ\theta= (\tau\circ\ev_1)\circ\theta=\tau.$$ Thus, $\tau$ is a quasidiagonal trace on $A$.
\end{proof}

Let us end this section with an observation that we are going to use in the proof of Theorem \ref{no1}. This is in the spirit of deunitization tricks used in \cite{TWW} and \cite{chris1}. Precisely, the first part is very similar to the techniques used in {\cite[Theorem 1.2]{chris1}}, and the latter part is {\cite[Proposition 1.4]{TWW}}, without assuming nuclearity of $A.$

\begin{lemma}\label{tracecontraction}
Let $\tau$ be an amenable trace on a separable $C^*$-algebra $A$. Then $\frac{1}{2}\tau$ is amenable. Moreover, if $\frac{1}{2}\tau$ is quasidiagonal, then $\tau$ is quasidiagonal. 
\end{lemma}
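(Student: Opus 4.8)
The plan is to treat the two assertions separately: the amenability statement falls out of a trivial padding, whereas the quasidiagonality implication requires a genuine \emph{doubling} argument to recover $\tau$ from $\tfrac12\tau$.

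For the first claim I would argue directly from Definition~\ref{amqdef}(1). If $(\phi_n\colon A\to\mathbb{M}_{k(n)})$ witnesses amenability of $\tau$, then the padded maps $\psi_n(a)=\phi_n(a)\oplus 0\in\mathbb{M}_{2k(n)}$ are again cpc and satisfy $\tr_{2k(n)}(\psi_n(a))=\tfrac12\tr_{k(n)}(\phi_n(a))\to\tfrac12\tau(a)$, while $\|\psi_n(ab)-\psi_n(a)\psi_n(b)\|_2=\tfrac{1}{\sqrt2}\|\phi_n(ab)-\phi_n(a)\phi_n(b)\|_2\to0$, since embedding into a matrix algebra of twice the size scales the normalised $2$-norm by $1/\sqrt2$. (Equivalently, one compresses to a corner of trace $\tfrac12$ in $\mathbb{M}_2(\mathcal{R}^\omega)\cong\mathcal{R}^\omega$ and applies Proposition~\ref{keyprop}(1).) Hence $\tfrac12\tau$ is amenable.

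For the second claim I would pass to the ultrapower picture. By Proposition~\ref{keyprop}(2), quasidiagonality of $\tfrac12\tau$ yields a $^*$-homomorphism $\phi\colon A\to\mathcal{Q}_\omega$ with a cpc lift and $\tr_\omega\circ\phi=\tfrac12\tau$. The idea is to build a second copy of $\phi$ with range orthogonal to that of $\phi$ and add the two, so that the trace doubles back to $\tau$. Crucially, $\tr_\omega$ assigns the image of $\phi$ total mass $\tfrac12\|\tau\|\le\tfrac12$, so there is room to fit such a copy: writing $p$ for the unit of $\phi(A)$ (so $p=\phi(1)$ when $A$ is unital, with $\tr_\omega(p)=\tfrac12\|\tau\|$), one has $\tr_\omega(1-p)=1-\tfrac12\|\tau\|\ge\tr_\omega(p)$, whence a partial isometry $v\in\mathcal{Q}_\omega$ with $v^*v=p$ and $vv^*\le 1-p$. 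Setting $\psi=\phi+\Ad(v)\circ\phi$, the relations $pv=0$, $\phi(a)=p\phi(a)p$ and $v^*v=p$ force the cross terms to vanish, giving $\psi(a)\psi(b)=\phi(ab)+v\phi(ab)v^*=\psi(ab)$, so $\psi$ is an honest $^*$-homomorphism; moreover $\tr_\omega(\psi(a))=\tr_\omega(\phi(a))+\tr_\omega(\phi(a)v^*v)=2\tr_\omega(\phi(a))=\tau(a)$. To apply Proposition~\ref{keyprop}(2) it remains to lift $\psi$: lifting $\phi$ by its cpc lift $\Phi$ and $v$ by a contraction $(v_n)$, the completely positive maps $a\mapsto\Phi_n(a)+v_n\Phi_n(a)v_n^*$ have norm tending to $1$ (the two summands becoming asymptotically orthogonally supported), so a harmless rescaling produces a cpc lift and $\tau$ is quasidiagonal.

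The main obstacle is carrying out the construction of the orthogonal copy beyond the unital case. When $A$ is non-unital there is no $\phi(1)$ to use, and I would instead exploit that the uniform ultrapower $\mathcal{Q}_\omega$ is countably saturated and of real rank zero to produce a projection $p$ acting as a unit for the separable subalgebra $\phi(A)$, with $\tr_\omega(p)=\tfrac12\|\tau\|$, and then argue exactly as above. This is precisely where the deunitization techniques of \cite{TWW} and \cite{chris1} enter, and organising this step cleanly—so that the projection $p$, the partial isometry $v$, and the resulting cpc lift are produced together—is the delicate point; once $p$ is in hand, the verification that $\psi$ is a trace-doubling $^*$-homomorphism is routine.
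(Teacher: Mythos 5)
Your first claim and its proof coincide with the paper's: pad $\phi_n$ into the corner of $\mathbb{M}_{2k(n)}(\mathbb{C})$, halving the trace and shrinking the normalised $2$-norm defect. That part is fine.

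The second claim is where you diverge from the paper, and your route has a genuine gap at its central step: the existence of a partial isometry $v\in\mathcal{Q}_\omega$ with $v^*v=p$ and $vv^*\le 1-p$. You deduce this from $\tr_\omega(p)\le\tr_\omega(1-p)$, but in the \emph{uniform} ultrapower $\mathcal{Q}_\omega$ (unlike in the tracial ultrapower $\mathcal{R}^\omega$) trace comparison of projections does not imply Murray--von Neumann subequivalence: $\mathcal{Q}_\omega$ contains nonzero projections of trace $0$, e.g.\ represented by rank-one projections in $\mathbb{M}_{2^n}$. In the case that actually matters for the paper --- $\tau$ a tracial state, so $\tr_\omega(p)=\tr_\omega(1-p)=\tfrac12$ --- a representing sequence $(p_n)$ may satisfy $\operatorname{rank}(p_n)>k(n)/2$ for all $n$, and then a spectral-projection/rank count shows \emph{no} $v$ with $v^*v=p$ and $vv^*$ (even approximately) orthogonal to $p$ can exist. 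So the doubling map $\psi=\phi+\Ad(v)\circ\phi$ cannot be built from an arbitrary $\phi$ witnessing quasidiagonality of $\tfrac12\tau$; you would first have to perturb the finite-dimensional models so that the support projections have normalised trace strictly below $\tfrac12$ at each stage (e.g.\ by a further small corner embedding), which is an extra argument you have not supplied. When $\|\tau\|<1$ the inequality is strict and your argument does go through, but that excludes the intended application. A secondary, more minor issue is the cpc lift of $\psi$: "the norms tend to $1$" only holds pointwise in $a$, so the rescaling needs to be replaced by the standard Arveson-type correction of a bounded cp lift of a cpc map; this is routine but should be said.

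For comparison, the paper avoids the ultrapower entirely here: it takes the finite-dimensional cpc maps $\psi_n$ witnessing quasidiagonality of $\tfrac12\tau$, perturbs $\psi_n(1)$ to a projection $p_n$ of normalised trace about $\tfrac12$, and compresses to the corner $p_n\mathbb{M}_{k(n)}(\mathbb{C})p_n$; the normalised trace of the corner rescales $\tfrac12\tau$ back to $\tau$. This sidesteps the comparison problem completely, because the renormalisation happens at the level of traces of matrix algebras rather than by fitting two orthogonal copies under a fixed unit. If you want to salvage your doubling argument, the corner-trace renormalisation is essentially the fix you need.
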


\begin{proof}
With the notation as in Definition \ref{amqdef}, let $\phi_n:A\to\mathbb{M}_{k(n)}(\mathbb{C})$ be cpc maps, approximately multiplicative in $2$-norm approximating the trace $\tau$. If we denote by $\iota_n$ the canonical embedding into the top left corner $\mathbb{M}_{k(n)}(\mathbb{C})\to \mathbb{M}_{2k(n)}(\mathbb{C})$, let $\psi_n=\iota_n\circ\phi_n$. Then, $\psi_n$ is a sequence of cpc maps, approximately multiplicative in $2$-norm such that $\tr_{2k(n)}(\psi_n(a))\to \frac{1}{2}\tau(a)$ for all $a\in A$. Thus, the trace $\frac{1}{2}\tau$ is amenable.

For the last part, if $\frac{1}{2}\tau$ is quasidiagonal, then we can follow the strategy in the implication $(ii)(c) \implies (ii)(b)$ in {\cite[Proposition 1.4]{TWW}}. Suppose there exist approximately multiplicative cpc maps $\psi_n:A\to\mathbb{M}_{k(n)}(\mathbb{C})$ approximating the trace $\frac{1}{2}\tau$. If $A$ is unital, then, for $n$ large enough, $\psi_n(1)$ is approximately a projection, so let $p_n\in\mathbb{M}_{k(n)}(\mathbb{C})$ be a projection close to $\psi_n(1)$. Then, we can consider $\psi_n$ as a cpc map into the corner $p_n\mathbb{M}_{k(n)}(\mathbb{C})p_n$ and it is still approximately multiplicative since $\psi_n(1)$ approximately commutes with the image of $\psi_n$. Finally, exactly as in {\cite[Proposition 1.4]{TWW}}, one can note that the sequence of cpc maps $(\psi_n)_{n\geq 1}$ approximates the trace $\tau$. Thus, $\tau$ is quasidiagonal.

If $A$ is non-unital, then we can pass to the unitization and the same proof follows since a trace on $A$ is quasidiagonal if and only if the induced trace on the unitization is quasidiagonal{\cite[Proposition 3.5.10]{qdtraces}}.
\end{proof}

\section{Main results}

The proof of Theorem \ref{no1} is heavily motivated by Theorem $1.2$ in \cite{chris1}. Essentially, we can build two extensions with homotopic Busby invariants and, since one splits, the other will split as well. We refer the reader to {\cite[Section 2]{chris1}} or {\cite[Chapter $\VII$]{blackadar}} for an introduction to the theory of extensions. There are \emph{three} key steps in the proof of {\cite[Theorem 1.2]{chris1}}: obtain a separable version of the extension $\eta_0$ built in \eqref{pullbackext}, show it is nuclearly absorbing, and that it has class $0$ in $\Ext_{\nuc}$. Here, an extension has class $0$ if it has a weakly nuclear splitting after taking the direct sum with an extension with a weakly nuclear splitting. Moreover, an extension is called \emph{nuclearly absorbing} if taking the direct sum with any extension with a weakly nuclear $^*$-homomorphic splitting we obtain the same extension up to an appropriate notion of equivalence.

In {\cite[Theorem 1.2]{chris1}}, the $\UCT$ is needed to show that the relevant $\Ext_{\nuc}$ class vanishes. We are going to avoid assuming the $\UCT$ by using that the relevant class in $\Ext_{\nuc}$ is preserved under homotopy of the Busby invariant.

Therefore, we will break the proof into propositions illustrating these steps. Let us start by producing a separable version of \eqref{pullbackext}. Crucially, we will use the fact that the property of being an admissible kernel (in the sense of {\cite[Definition 2.1]{chris1}}) is separably inheritable {\cite[Proposition 4.1]{chris1}}. The following result is essentially Proposition $4.2$ of \cite{chris1}, with the only modification being that we can make sure that $B_0$ contains a specified separable preimage of $\pi$. Note that the case $C=\{0\}$ is exactly Proposition $4.2$ in \cite{chris1}. 

\begin{prop}[cf. {\cite[Proposition 4.2]{chris1}}]\label{separable}
Consider an extension 
\[
\begin{tikzcd}
0 \ar{r} & I \ar{r} & B \ar{r}{\pi} & D \ar{r} & 0
\end{tikzcd}
\]
such that $I$ is an admissible kernel, $B$ and $D$ are unital. Suppose $D_0\subset D$ is any separable, unital subalgebra and $C$ is a separable $C^*$-subalgebra of $B$ such that $\pi(C)\subset D_0$. Then, there exists a separable, unital subalgebra $B_0\subset B$ such that $C\subset B_0$, $\pi(B_0)=D_0$ and $B_0\cap I$ is an admissible kernel. 
\end{prop}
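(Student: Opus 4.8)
The plan is to run a Blackadar-style intertwining argument, building $B_0$ as the closure of an increasing union of separable subalgebras $C \subseteq C_0 \subseteq C_1 \subseteq \cdots$ of $B$, where at each stage I perform three bookkeeping tasks: enlarging towards surjectivity onto $D_0$, enlarging so that the kernel gets captured by admissible kernels, and maintaining unitality. Concretely, I would start with $C_0 = C^*(C \cup \{1_B\})$, which is separable, unital, and satisfies $\pi(C_0) \subseteq D_0$ because $\pi(C) \subseteq D_0$, $1_D \in D_0$, and $D_0$ is a $C^*$-subalgebra.

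For the surjectivity requirement, I fix a countable dense subset $\{d_j\}_{j \geq 1}$ of $D_0$ and, using surjectivity of $\pi$, choose preimages $b_j \in B$ with $\pi(b_j) = d_j$; adding the $b_j$ cofinally guarantees that $\pi(B_0)$ contains a dense subset of $D_0$, and since $\pi|_{B_0}$ is a $^*$-homomorphism its image is closed, so $\pi(B_0) \supseteq D_0$. Conversely, provided every generator adjoined at each stage projects into $D_0$ (which will hold for the $b_j$, for $1_B$, and for the kernel elements introduced below, all mapping into $D_0$), one gets $\pi(C_n) \subseteq D_0$ for every $n$ and hence $\pi(B_0) \subseteq \overline{\pi(\bigcup_n C_n)} \subseteq D_0$ by continuity and closedness of $D_0$. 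Together these give $\pi(B_0) = D_0$.

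The crux is arranging $B_0 \cap I$ to be an admissible kernel. Here I would invoke separable inheritability of the admissible kernel property {\cite[Proposition 4.1]{chris1}}: applied to the separable subalgebra $C_n \cap I$ of the admissible kernel $I$, it produces a separable admissible kernel $K_n$ with $C_n \cap I \subseteq K_n \subseteq I$. The subtle point is that an increasing intertwining does not by itself force $B_0 \cap I = \overline{\bigcup_n K_n}$, since intersecting with $I$ need not commute with taking the closure of a union. To repair this I would exploit that $\operatorname{dist}(c, I) = \|\pi(c)\|$ for every $c \in B$ (as $B/I \cong D$ isometrically): having fixed a countable dense subset $\{c_{n,j}\}_j$ of $C_n$, I choose $z_{n,j} \in I$ with $\|c_{n,j} - z_{n,j}\| \leq \|\pi(c_{n,j})\| + 1/(n+1)$ and adjoin the $z_{n,j}$ at the next stage. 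Setting $C_{n+1} = C^*\bigl(C_n \cup K_n \cup \{z_{n,j}\}_j \cup \{b_1, \dots, b_n\} \cup \{1_B\}\bigr)$ then keeps each $C_n$ separable, unital, and projecting into $D_0$, and the relation $K_n \subseteq C_{n+1} \cap I \subseteq K_{n+1}$ makes the $K_n$ increasing.

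With $B_0 = \overline{\bigcup_n C_n}$ I would finish by verifying $B_0 \cap I = \overline{\bigcup_n K_n}$. The inclusion $\supseteq$ is immediate since each $K_n \subseteq B_0 \cap I$. For $\subseteq$, take $x \in B_0 \cap I$ and $\varepsilon > 0$ and choose $c_{n,j}$ with $\|x - c_{n,j}\| < \varepsilon$; as $x \in I$ this gives $\|\pi(c_{n,j})\| = \|\pi(c_{n,j} - x)\| < \varepsilon$, whence $\|c_{n,j} - z_{n,j}\| < \varepsilon + 1/(n+1)$ and so $z_{n,j} \in C_{n+1} \cap I \subseteq K_{n+1}$ lies within $2\varepsilon + 1/(n+1)$ of $x$, forcing $x \in \overline{\bigcup_n K_n}$. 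Thus $B_0 \cap I$ is the closure of an increasing union of admissible kernels, hence again an admissible kernel by the inductive-limit closure that is part of separable inheritability {\cite[Proposition 4.1]{chris1}}. I expect the main obstacle to be exactly this commutation issue between intersection and limit; once it is handled by the distance-minimizing choice of the $z_{n,j}$, the remaining verifications (separability, unitality, containment of $C$) are routine.
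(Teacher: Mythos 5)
Your proof is correct and is in substance the same as the paper's: the paper simply arranges countable dense sets $S_0\subset S$ with $\pi(S)$ dense in $D_0$ and then runs the increasing-union/separable-inheritability argument of {\cite[Proposition 4.2]{chris1}} verbatim, which is precisely the Blackadar-style construction you reconstruct, including the device $\operatorname{dist}(c,I)=\|\pi(c)\|$ for identifying $B_0\cap I$ with $\overline{\bigcup_n K_n}$. The only point to make explicit in your last step is that the approximant $c_{n,j}$ of $x$ may be chosen with $n$ arbitrarily large (the $C_n$ increase), so that the error $2\varepsilon+1/(n+1)$ can indeed be made small.
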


\begin{proof}
Let $S_0$ be countable dense in $C$. Fix a countable dense subset $T\subset D_0$ such that $\pi(S_0)\subset T$ and let $S\subset B$ countable such that $S_0\subset S$ and $\pi(S)=T$. The rest of the proof now follows verbatim as in {\cite[Proposition 4.2]{chris1}}.
\end{proof}

Combining this and Lemma \ref{qdsplits}, one can show that a quasidiagonal trace produces a separable extension with a weakly nuclear $^*$-homomorphic splitting. To show this, let us move into the set-up of Theorem \ref{no1}, but without assuming faithfulness. Let $A$ be a separable exact $C^*$-algebra, $\tau$ an amenable trace on $A$, and $\sigma:A\rightarrow A$ a $^*$-homomorphism which is homotopic to the identity map on $A$ such that $\tau\circ\sigma$ is a quasidiagonal trace on $A$. Using the notation in Proposition \ref{keyprop}, let $\phi:A\to\mathcal{R}^{\omega}$ be a $^*$-homomorphism witnessing amenability of $\tau$.

\begin{prop}\label{sepextsplits}
There exists an extension 
\[\begin{tikzcd}\label{separabletracekernel}
\eta' : 0\ar{r} & J_0\ar{r}  & Q_0\ar{r}{\pi_0} & R_0\ar{r} & 0
\end{tikzcd}
\] with $J_0$ a separable admissible kernel and $\phi(A)\subseteq R_0$. If we denote by $\phi_0$ the corestriction of $\phi$ to $R_0$, then the pullback extension induced by $\pi_0$ and $\phi_0\circ\sigma$ has a weakly nuclear $^*$-homomorphic splitting.
\end{prop}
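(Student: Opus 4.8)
The plan is to encode the quasidiagonality of $\tau\circ\sigma$ as a weakly nuclear splitting of the (non-separable) pullback of the trace-kernel extension $\eta$, and then to cut everything down to a separable extension via Proposition \ref{separable}, taking care that the splitting survives corestriction. First I would record a witness for $\tau\circ\sigma$: if $\phi$ witnesses amenability of $\tau$ with cpc lift $\Phi\colon A\to\ell^\infty(\mathcal{R})$, then $\phi\circ\sigma$ satisfies $\tr^\omega\circ(\phi\circ\sigma)=\tau\circ\sigma$ and lifts via $\Phi\circ\sigma$, so it witnesses amenability of $\tau\circ\sigma$, and the extension attached to $\tau\circ\sigma$ as in \eqref{pullbackext} is precisely the pullback of $\eta$ along $\phi\circ\sigma$. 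Since $\tau\circ\sigma$ is quasidiagonal, Lemma \ref{qdsplits} endows this pullback with a weakly nuclear $^*$-homomorphic splitting, equivalently a $^*$-homomorphism $\psi\colon A\to\mathcal{Q}_{\omega}$ with $\pi\circ\psi=\phi\circ\sigma$; moreover, as in the proof of Lemma \ref{qdsplits}, $\psi$ has a cpc lift into $\ell^\infty(\mathcal{Q})$ and is nuclear because $A$ is exact.

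Next I would produce the separable extension. Set $R_0:=C^*(\phi(A),1)\subseteq\mathcal{R}^{\omega}$ and $C:=C^*(\psi(A),1)\subseteq\mathcal{Q}_{\omega}$; both are separable and unital since $A$ is separable. As $\sigma(A)\subseteq A$, we get $\pi(C)=C^*(\phi\circ\sigma(A),1)\subseteq C^*(\phi(A),1)=R_0$, so $C$ and $R_0$ satisfy the hypotheses of Proposition \ref{separable} for the extension $\eta$ (with $D_0=R_0$). This yields a separable unital $Q_0$ with $C\subseteq Q_0\subseteq\mathcal{Q}_{\omega}$, $\pi(Q_0)=R_0$, and $J_0:=Q_0\cap J$ an admissible kernel, hence the extension $\eta'$ together with the inclusion $\phi(A)\subseteq R_0$.

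Writing $\pi_0=\pi|_{Q_0}$ and $\phi_0$ for the corestriction of $\phi$, the pullback of $\eta'$ along $\phi_0\circ\sigma$ is a subextension of the pullback of $\eta$ along $\phi\circ\sigma$, and I claim $a\mapsto(a,\psi(a))$ is a $^*$-homomorphic splitting of it. Indeed $\psi(A)\subseteq C\subseteq Q_0$, so the map lands in the pullback, and $\pi_0\circ\psi=\pi\circ\psi=\phi\circ\sigma=\phi_0\circ\sigma$ as maps into $R_0$, so it is a genuine splitting.

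The remaining point, which I expect to be the main obstacle, is that this splitting is \emph{weakly nuclear}. Unwinding the definition, one must show that for every $x\in J_0$ the compression $a\mapsto x\psi(a)x^*$ is nuclear as a map into $J_0$. Since $J_0\trianglelefteq Q_0$ and the image of this compression lies in $J_0$, and since corestriction of a nuclear map to an ideal is harmless (the maximal tensor product being exact), it suffices to show that $\psi_0:=\psi|^{Q_0}\colon A\to Q_0$ is nuclear. This is exactly the delicate step: nuclearity of $\psi$ as a map into $\mathcal{Q}_{\omega}$ does \emph{not} in general survive corestriction to the subalgebra $Q_0$ (for instance the inclusion $C^*_r(\mathbb{F}_2)\hookrightarrow\mathcal{O}_2$ is nuclear while its corestriction to its image is not). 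To handle this I would not corestrict abstractly, but exploit the cpc lift $(\psi^{(n)})$ of $\psi$, whose components factor through the nuclear algebra $\mathcal{Q}$, together with the exactness of $A$ via the mechanism of {\cite[Proposition 3.1]{gabe2}}; the admissibility of $J_0$ and the inclusion $C\subseteq Q_0$ are what should allow the approximating data to be kept inside $Q_0$, thereby promoting the pointwise nuclearity of the lift to nuclearity of $\psi_0$ into $Q_0$. Making this localisation to $Q_0$ precise is the step that will require the most care.
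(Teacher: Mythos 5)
Your architecture is essentially the paper's: obtain $\psi\colon A\to\mathcal{Q}_{\omega}$ with $\pi\circ\psi=\phi\circ\sigma$ from Lemma \ref{qdsplits}, feed a separable subalgebra containing $\psi(A)$ into Proposition \ref{separable} as the prescribed preimage $C$, and let $a\mapsto(a,\psi(a))$ split the separable pullback. One point of divergence is the choice of $R_0$: the paper does not take $R_0=C^*(\phi(A),1)$ but invokes {\cite[Proposition 4.3]{chris1}} to produce a separable unital $R_0$ containing $\phi(A)$ that is \emph{simple} and such that $\phi$ is \emph{nuclear as a map into} $R_0$. Neither property is demanded by the bare statement, but both are consumed downstream (nuclearity of $\phi_0$ is what makes the homotopy of Busby invariants in Proposition \ref{class0} run through nuclear maps, and simplicity feeds the fullness argument behind Proposition \ref{absorbing}), so your choice would force a second separabilization later.

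The more important issue is the one you flag and leave open: weak nuclearity of the splitting into the separable ideal $J_0$. You are right that nuclearity of $\psi$ does not survive abstract corestriction to $Q_0$ (your $C^*_r(\mathbb{F}_2)\hookrightarrow\mathcal{O}_2$ example is apt), but the mechanism you propose --- rerunning {\cite[Proposition 3.1]{gabe2}} ``inside $Q_0$'' --- does not apply, since $Q_0$ is just a separable subalgebra of $\mathcal{Q}_{\omega}$ and is not presented as a quotient of a product of nuclear algebras. The standard way to close this is the factorization-data enlargement: since $\psi$ is nuclear into $\mathcal{Q}_{\omega}$, fix cp factorizations $\beta_n\circ\alpha_n\to\psi$ through matrix algebras and enlarge $C$ to contain $\bigcup_n\beta_n(\mathbb{M}_{k_n}(\mathbb{C}))$ as well as $\psi(A)$, enlarging $R_0$ to contain $\pi(C)$ beforehand (harmless, as simplicity is separably inheritable and nuclearity of $\phi$ into $R_0$ persists under enlargement of the target). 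Then $\psi$ is nuclear \emph{as a map into} $Q_0$, and for $x\in J_0$ the compression $x\psi(\cdot)x^*$ is the composite of a nuclear map into $Q_0$ with the cp map $Q_0\to J_0$, $q\mapsto xqx^*$, hence nuclear into $J_0$. Without some such device the key clause of the statement --- that the splitting is weakly nuclear --- remains unproven, so as written the proposal has a genuine gap at precisely the step you identify as delicate.
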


\begin{proof}
Following Proposition $4.3$ in \cite{chris1}, we get a separable, unital $C^*$-subalgebra $R_0\subset\mathcal{R}^{\omega}$ such that $R_0$ is simple, $\phi(A)\subset R_0$ and $\phi$ is nuclear as a map $A\rightarrow R_0$. Denote by $\phi_0$ the corestriction of $\phi$ to $R_0$. Applying Proposition \ref{separable} to the trace-kernel extension $\eta$, we obtain a separable, unital subalgebra $Q_0\subset\mathcal{Q}_{\omega}$ such that $J_0=Q_0\cap J$ is an admissible kernel and $\pi(Q_0)=R_0$. We denote by $\pi_0$ the restriction of $\pi$ from $Q_0$ into $R_0$. Then, we can form a pullback extension.
\begin{equation}\label{sepsplitext}
\begin{tikzcd}
\eta_1' : 0\ar{r}  & J_0\ar{r}\ar{d}{\id} & E_1'\ar{r}\ar{d} & A\ar{r}\ar{d}{\phi_0\circ\sigma} & 0\\
\eta' : 0\ar{r} & J_0\ar{r}  & Q_0\ar{r}{\pi_0} & R_0\ar{r} & 0
\end{tikzcd}
\end{equation}

By Lemma \ref{qdsplits}, the pullback extension induced by $\pi$ and $\phi\circ\sigma$ has a weakly nuclear $^*$-homomorphic splitting induced by $\psi:A\to\mathcal{Q}_{\omega}$. Since $\pi(\psi(A))=\phi\circ\sigma(A)\subset R_0$, Proposition \ref{separable} allows us to assume that $\psi(A)\subset Q_0$, so $\psi$ will also give a weakly nuclear $^*$-homomorphic splitting for the extension $\eta_1'$.
\end{proof}

With the notation from Proposition \ref{sepextsplits}, we consider the pullback extension \begin{equation}\label{septargetext}
\begin{tikzcd}
\eta_0' :0\ar{r}& J_0\ar{r}\ar{d}{\id} & E_0' \ar{r}\ar{d} & A\ar{r}\ar{d}{\phi_0} & 0\\
\eta'  :0\ar{r} & J_0 \ar{r} & Q_0 \ar{r}{\pi_0} & R_0 \ar{r} & 0
\end{tikzcd}
\end{equation} induced by $\pi_0$ and $\phi_0$, and we will prove that $\eta_0'$ has a weakly nuclear $^*$-homomorphic splitting.

If $A$ and $B$ are separable $C^*$-algebras, a direct consequence of the canonical identification $\KK^1(A,B)\cong \Ext^{-1}(A,B)$ {\cite[Corollary 18.5.4]{blackadar}}, says that two semisplit extensions with homotopic Busby invariants have the same class in $\Ext^{-1}(A,B)$ {\cite[Corollary 15.10.1]{blackadar}}. The key technical fact we are claiming is that the same holds when all extensions are weakly nuclear. Precisely, if $\beta_1$ and $\beta_2$ are nuclear Busby invariants, homotopic via a path of nuclear maps, then the extensions they induce have the same class in $\Ext_{\nuc}$. This is a direct consequence of
{\cite[Corollary 1.8]{kucerovsky}}. In {\cite[Corollary 1.8]{kucerovsky}}, Kucerovsky and Ng assume that all extensions are weakly nuclear, and by an absorbing extension they mean an extension which absorbs a weakly nuclear split extension. With these clarifications, {\cite[Corollary 1.8]{kucerovsky}} translates to $\KK_{\nuc}^1(A,B)\cong \Ext_{\nuc}(A,B)$. Therefore, if we take $\beta_1$ and $\beta_2$ as above, since the homotopy is induced by nuclear maps, Theorem $4.4$ in \cite{chris1} shows that the extensions induced by $\beta_1,\beta_2$, and the maps realising the homotopy are weakly nuclear. Since $\KK_{\nuc}^1(A,B)$ is invariant under homotopy via nuclear maps, $\beta_1$ and $\beta_2$ induce the same element in $\KK_{\nuc}^1(A,B)$. Thus, they induce extensions with the same class in $\Ext_{\nuc}(A,B)$ by {\cite[Corollary 1.8]{kucerovsky}}. Let us apply this observation to the extensions $\eta_0'$ and $\eta_1'$ defined in \eqref{septargetext} and \eqref{sepsplitext}.

\begin{prop}\label{class0}
With the notation from \eqref{septargetext}, the class of the extension $\eta_0'$ is $0$ in $\Ext_{\nuc}(A,J_0)$.
\end{prop}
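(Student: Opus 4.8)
The plan is to show that $\eta_0'$ carries the same class in $\Ext_{\nuc}(A,J_0)$ as the extension $\eta_1'$ from \eqref{sepsplitext}, which represents $0$ because Proposition \ref{sepextsplits} equips it with a weakly nuclear $^*$-homomorphic splitting. Both $\eta_0'$ and $\eta_1'$ are pullbacks of the same separable trace-kernel extension $\eta'$ along $^*$-homomorphisms $A\to R_0$, namely $\phi_0$ and $\phi_0\circ\sigma$, respectively. Writing $\beta\colon R_0\to \mathcal{M}(J_0)/J_0$ for the Busby invariant of $\eta'$, functoriality of the Busby invariant under pullbacks identifies the Busby invariants of $\eta_0'$ and $\eta_1'$ with $\beta\circ\phi_0$ and $\beta\circ\phi_0\circ\sigma$.

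The heart of the argument is that these two Busby invariants are homotopic through nuclear maps. They are nuclear to begin with: $\phi_0$ is nuclear by construction (it is the corestriction of the nuclear map $\phi$ produced in Proposition \ref{sepextsplits} using exactness of $A$), while $\sigma$ and $\beta$ are $^*$-homomorphisms, and nuclearity is stable under pre- and post-composition with $^*$-homomorphisms. For the homotopy, use that $\sigma$ is homotopic to $\id_A$ to fix a $^*$-homomorphism $\Sigma\colon A\to C([0,1],A)$ with $\ev_0\circ\Sigma=\sigma$ and $\ev_1\circ\Sigma=\id_A$; set $\sigma_t=\ev_t\circ\Sigma$, so that $\sigma_0=\sigma$ and $\sigma_1=\id_A$. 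Under the identification $C([0,1],A)\cong C([0,1])\otimes A$, composing $\Sigma$ with $\id_{C([0,1])}\otimes\phi_0$ and then with the $^*$-homomorphism induced by $\beta$ produces a map $A\to C([0,1],\mathcal{M}(J_0)/J_0)$ whose evaluation at $t\in[0,1]$ is $\beta\circ\phi_0\circ\sigma_t$. This is a point-norm continuous path of nuclear maps connecting $\beta\circ\phi_0\circ\sigma$ to $\beta\circ\phi_0$, and the assembled cylinder map is itself nuclear because $\id_{C([0,1])}\otimes\phi_0$ is nuclear.

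With the homotopy in hand, the observation recorded just before the statement applies: since $\beta\circ\phi_0$ and $\beta\circ\phi_0\circ\sigma$ are nuclear Busby invariants homotopic via nuclear maps, Theorem~4.4 of \cite{chris1} guarantees that the extensions they induce are weakly nuclear, and the identification $\KK_{\nuc}^1(A,J_0)\cong\Ext_{\nuc}(A,J_0)$ of {\cite[Corollary 1.8]{kucerovsky}}, together with homotopy invariance of $\KK_{\nuc}^1$, forces $\eta_0'$ and $\eta_1'$ to define the same element of $\Ext_{\nuc}(A,J_0)$. As $\eta_1'$ has a weakly nuclear $^*$-homomorphic splitting, its class vanishes, and we conclude that the class of $\eta_0'$ is $0$ as well.

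The step I expect to require the most care is certifying that the homotopy is genuinely through nuclear maps in the sense needed to invoke Theorem~4.4 of \cite{chris1} and {\cite[Corollary 1.8]{kucerovsky}}: it is not enough that each $\beta\circ\phi_0\circ\sigma_t$ be nuclear; one needs the assembled cylinder map $A\to C([0,1],\mathcal{M}(J_0)/J_0)$ to be nuclear, which is exactly what nuclearity of $\phi_0$ (hence of $\id_{C([0,1])}\otimes\phi_0$) provides. A secondary point to verify carefully is the correct identification of the pullback Busby invariants as $\beta\circ\phi_0$ and $\beta\circ\phi_0\circ\sigma$, after which the homotopy $\phi_0\circ\sigma\simeq\phi_0$ coming from $\sigma\simeq\id_A$ is immediate.
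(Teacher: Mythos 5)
Your proof is correct and follows essentially the same route as the paper: identify the Busby invariants of $\eta_0'$ and $\eta_1'$ as $\beta\circ\phi_0$ and $\beta\circ\phi_0\circ\sigma$, observe that the homotopy $\sigma\simeq\id_A$ together with nuclearity of $\phi_0$ yields a homotopy through nuclear maps, and then invoke the $\KK_{\nuc}^1\cong\Ext_{\nuc}$ identification and the splitting of $\eta_1'$. Your explicit construction of the cylinder map and the check that the assembled map is nuclear simply spell out details the paper leaves implicit.
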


\begin{proof}
Denote by $M(J_0)$ the multiplier algebra of $J_0$. Then, if $\beta:R_0\rightarrow M(J_0)/J_0=Q(J_0)$ is the Busby invariant of the extension $\eta'$, then $\beta\circ\phi_0$ is the Busby invariant of $\eta_0'$ and $\beta\circ\phi_0\circ\sigma$ is the Busby invariant of $\eta_1'$. Since $\sigma$ is homotopic to the identity on $A$, the Busby invariants of the extensions $\eta_0'$ and $\eta_1'$ are homotopic and since $\phi_0$ is nuclear, the homotopy is realised via nuclear maps. Therefore, the observation above implies that the extensions $\eta_0'$ and $\eta_1'$ have the same class in $\Ext_{\nuc}(A,J_0)$. 

Finally, $\eta_1'$ is a split extension by Proposition \ref{sepextsplits}, so $\eta_0'$ has class $0$ in $\Ext_{\nuc}(A,J_0)$. 
\end{proof}

The last step to conclude that $\eta_0'$ is a split extension is to show that $\eta_0'$ is nuclearly absorbing. This is the point where the faithfulness of $\tau$ comes into picture and, as in \cite{chris1}, it is required to prove that $\eta_0'$ is unitizably full (see {\cite[Section 2]{chris1}}). Since a unital $^*$-homomorphism cannot be unitizably full, we need to split into cases, and we first handle the case where $\phi_0$ is non-unital.

\begin{prop}\label{absorbing}
Let $A$ be a separable exact $C^*$-algebra, $\tau$ a faithful amenable trace on $A$, and $\sigma:A\rightarrow A$ a $^*$-homomorphism which is homotopic to the identity map on $A$ such that $\tau\circ\sigma$ is a quasidiagonal trace on $A$. Suppose further that $A$ is non-unital or $A$ is unital and $\phi_0(1)\neq 1$. Then $\eta_0'$ is nuclearly absorbing.
\end{prop}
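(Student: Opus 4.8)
The plan is to show that $\eta_0'$ is nuclearly absorbing by invoking the standard absorption machinery, for which the crucial input is that the extension is unitizably full. Following the strategy in \cite{chris1}, I would first reduce the problem to verifying fullness of the Busby invariant (or equivalently of the inclusion $E_0'\hookrightarrow Q_0$ restricted appropriately), and then exploit the faithfulness of $\tau$ together with the hypothesis $\phi_0(1)\neq 1$ (or non-unitality of $A$) to produce the fullness.

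\emph{First}, I would recall the relevant absorption theorem: an extension with an admissible (separable) kernel whose Busby invariant is a unitizably full weakly nuclear map absorbs every weakly nuclear split extension up to the appropriate equivalence. Since $J_0$ is a separable admissible kernel by Proposition \ref{sepextsplits}, and $\phi_0$ is nuclear (so the Busby invariant $\beta\circ\phi_0$ is nuclear), the only missing ingredient is unitizable fullness of $\eta_0'$. Thus the whole proposition reduces to establishing that $\beta\circ\phi_0$ is unitizably full.

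\emph{Second}, to obtain fullness I would trace through how faithfulness of $\tau$ enters. The point is that $\phi:A\to\mathcal R^\omega$ satisfies $\tr^\omega\circ\phi=\tau$, and since $\tau$ is faithful, $\phi$ is injective; more importantly, for any nonzero positive $a\in A$ one has $\tau(a)>0$, which forces $\phi_0(a)$ to be a nonzero positive element whose image generates a large enough ideal. Because $R_0$ was arranged to be \emph{simple} (from Proposition $4.3$ of \cite{chris1}, as recalled in the proof of Proposition \ref{sepextsplits}), every nonzero element of $R_0$ is automatically full in $R_0$. The hypothesis $\phi_0(1)\neq 1$ (or $A$ non-unital) guarantees that $\phi_0$ lands in a genuinely non-unital way, so that after unitizing one can use the complementary piece $1-\phi_0(1)$ to witness fullness of the relevant elements in $M(J_0)/J_0$; this is precisely the deunitization trick that prevents a unital map from obstructing fullness.

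\emph{The main obstacle} I anticipate is carefully verifying that fullness in the simple algebra $R_0$ transports, via the Busby invariant $\beta$, to unitizable fullness of $\beta\circ\phi_0$ in the corona algebra $Q(J_0)$. One must check that $\beta$ does not collapse the fullness — i.e.\ that the image of a full element of $R_0$ remains full in $Q(J_0)$ — and this is where the structure of $J_0$ as an admissible kernel and the non-unitality hypothesis must be combined. Concretely, I expect to show that for the unitization $\widetilde{\phi_0}:\widetilde A\to R_0$ (or the relevant corner), the element $\beta\circ\phi_0(a)$ together with the unit complement generates all of $Q(J_0)$ as a (closed, two-sided, unitized) ideal, using simplicity of $R_0$ to handle the $R_0$-side and the admissibility of $J_0$ to handle the corona-side. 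Once unitizable fullness is in hand, the cited absorption results apply verbatim to conclude that $\eta_0'$ is nuclearly absorbing.
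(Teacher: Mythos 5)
Your proposal is correct and follows essentially the same route as the paper: reduce nuclear absorption to unitizable fullness of the Busby invariant (which the paper obtains verbatim from Theorem 4.4 of \cite{chris1}, via exactly the faithfulness/simplicity/non-unitality argument you sketch), and then invoke the absorption machinery for separable admissible kernels --- in the paper, stability and the corona factorization property of $J_0$ combined with Theorem 2.6 of \cite{gabe}.
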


\begin{proof}
The fact that $\eta_0'$ is unitizably full follows verbatim from {\cite[Theorem 4.4]{chris1}}. Since $J_0$ is a separable admissible kernel, $J_0$ is stable and has the corona factorization property {\cite[Proposition 3.3.(3)]{chris2}}. Combining this with the fact that $\eta_0'$ is unitizably full, we get that $\eta_0'$ is nuclearly absorbing by Theorem $2.6$ in \cite{gabe}.
\end{proof}

Finally, the proof of our main result is a combination of the previous lemmas, so let us assume the same notation as in the pullback extensions built in \eqref{septargetext} and \eqref{sepsplitext}. Having our previous results at hand, the non-unital case follows verbatim as in {\cite[Theorem 1.4]{chris1}}. For the case when $\tau$ is a tracial state, instead of following the proof of {\cite[Theorem 1.4]{chris1}} which is using classification of normal $^*$-homomorphisms into $\II_1$-factors, we are going to use Lemma \ref{tracecontraction}.

\begin{proof}[Proof of Theorem \ref{no1}]
Suppose first that $A$ is non-unital or $A$ is unital and $\phi_0(1)\neq 1$. By combining Proposition \ref{class0} and Proposition \ref{absorbing}, we get that $\eta_0'$ has a weakly nuclear $^*$-homomorphic splitting. Then, by making use of exactness of $A$ and faithfulness of $\tau$, one can use the proof of Theorem $1.4$ in \cite{chris1} to get a nuclear $^*$-homomorphism $\psi_0:A\rightarrow\mathcal{Q}_{\omega}$ such that $\pi\circ\psi_0=\phi$. But since $\psi_0$ is nuclear, the Choi-Effros lifting theorem implies that $\psi_0$ has a cpc lift $A\rightarrow \ell^{\infty}(\mathcal{Q})$. Moreover, by choice of $\phi$, we have $\tr^{\omega}\circ\phi=\tau$, so $$\tau=\tr_{\omega}\circ\pi\circ\phi=\tr_{\omega}\circ\psi_0.$$ Thus, by Proposition \ref{keyprop}, $\tau$ is quasidiagonal. 

Finally, if $A$ is unital and $\phi_0$ is unital, then $\tau$ is a tracial state. By Lemma \ref{tracecontraction}, $\frac{1}{2}\tau$ is amenable, so one can consider the same problem with $\frac{1}{2}\tau$ and $\frac{1}{2}\tau\circ\sigma$, and since $\frac{1}{2}\tau$ is not a state, the proof above will give that $\frac{1}{2}\tau$ is quasidiagonal. Hence, $\tau$ is quasidiagonal by Lemma \ref{tracecontraction}.
\end{proof}

Theorem \ref{no2} follows easily by unravelling the definitions and using a simple trick to pass quasidiagonality from a faithful trace to all amenable traces on $A$.

\begin{proof}[Proof of Theorem \ref{no2}]
Let $\phi:A\rightarrow B$ and $\psi:B\rightarrow A$ be two $^*$-homomorphisms such that $\psi\circ\phi$ is homotopic to the identity map on $A$. 

Now let $\tau_0$ be a faithful amenable trace on $A$. Then, $\tau_0\circ\psi$ is amenable on $B$, so quasidiagonal by assumption. Thus, precomposing with $\phi$ gives a quasidiagonal trace on $A$. Therefore, by Theorem \ref{no1}, $\tau_0$ must be quasidiagonal on $A$. If $\tau$ is any amenable trace on $A$, for any $n\in\mathbb{N}$ the convex combination $\tau_n:=\frac{n-1}{n}\tau+\frac{1}{n}\tau_0$ is a faithful amenable trace on $A$, so quasidiagonal by the same argument. 

Then, the set of quasidiagonal traces is weak$^*$-closed {\cite[Proposition 3.5.1]{qdtraces}} and $\tau_n$ converges weak$^*$ to $\tau$, so $\tau$ must be quasidiagonal.
\end{proof}

\begin{question}
Is the property that all amenable traces are quasidiagonal invariant under homotopy for arbitrary separable $C^*$-algebras?
\end{question}

\bibliography{traces}

\begin{thebibliography}{10}

\bibitem{blackadar}
B.~Blackadar.
\newblock {\em {$K$}-theory for operator algebras}, volume~5 of {\em
  Mathematical Sciences Research Institute Publications}.
\newblock Cambridge University Press, Cambridge, second edition, 1998.

\bibitem{qdtraces}
N.~P. Brown.
\newblock Invariant means and finite representation theory of {$C^*$}-algebras.
\newblock {\em Mem. Amer. Math. Soc.}, 184(865):viii+105, 2006.

\bibitem{BCW}
N.~P. Brown, J.~R. Carri\'{o}n, and S.~White.
\newblock Decomposable approximations revisited.
\newblock In {\em Operator algebras and applications---the {A}bel {S}ymposium
  2015}, volume~12 of {\em Abel Symp.}, pages 45--65. Springer, [Cham], 2017.

\bibitem{connes}
A.~Connes.
\newblock Classification of injective factors. {C}ases {$II_{1},$} {$II_{\infty
  },$} {$III_{\lambda },$} {$\lambda \not=1$}.
\newblock {\em Ann. of Math. (2)}, 104(1):73--115, 1976.

\bibitem{dadarlat}
M.~Dadarlat.
\newblock Quasidiagonal morphisms and homotopy.
\newblock {\em J. Funct. Anal.}, 151(1):213--233, 1997.

\bibitem{gabe}
J.~Gabe.
\newblock A note on nonunital absorbing extensions.
\newblock {\em Pacific J. Math.}, 284(2):383--393, 2016.

\bibitem{gabe2}
J.~Gabe.
\newblock Quasidiagonal traces on exact {$C^*$}-algebras.
\newblock {\em J. Funct. Anal.}, 272(3):1104--1120, 2017.

\bibitem{reindexing}
J.~Gabe.
\newblock A new proof of {K}irchberg's {$\mathcal{O}_2$}-stable classification.
\newblock {\em J. Reine Angew. Math.}, 761:247--289, 2020.

\bibitem{kucerovsky}
D.~Kucerovsky and P.~Ng.
\newblock The corona factorization property and approximate unitary
  equivalence.
\newblock {\em Houston J. Math.}, 32(2):531--550, 2006.

\bibitem{chris1}
C.~Schafhauser.
\newblock A new proof of the {T}ikuisis-{W}hite-{W}inter theorem.
\newblock {\em J. Reine Angew. Math.}, 759:291--304, 2020.

\bibitem{chris2}
C.~Schafhauser.
\newblock Subalgebras of simple {AF}-algebras.
\newblock {\em Ann. of Math. (2)}, 192(2):309--352, 2020.

\bibitem{TWW}
A.~Tikuisis, S.~White, and W.~Winter.
\newblock Quasidiagonality of nuclear {$C^*$}-algebras.
\newblock {\em Ann. of Math. (2)}, 185(1):229--284, 2017.

\bibitem{voiculescu}
D.~Voiculescu.
\newblock A note on quasi-diagonal {$C^*$}-algebras and homotopy.
\newblock {\em Duke Math. J.}, 62(2):267--271, 1991.

\end{thebibliography}
\bibliographystyle{abbrv}
\end{document}